\numberwithin{equation}{section}
\newtheorem{theorem}{Theorem}[section]
\newtheorem{lemma}{Lemma}[section]
\begin{document}

\title{A modified Newton method for multilinear PageRank
\thanks{.} }

\author{Pei-Chang Guo \thanks{Corresponding author, e-mail: guopeichang@pku.edu.cn} \\
School of Science, China University of Geosciences, Beijing, 100083, China}

\date{}

\maketitle
\begin{abstract}
When studying the multilinear PageRank problem, a system of polynomial equations needs to be solved. In this paper, we develop convergence theory for a modified Newton method in a particular parameter regime. The sequence of vectors produced by Newton-like method is monotonically increasing and converges to the nonnegative solution. Numerical results illustrate the effectiveness of this procedure.

\vspace{2mm} \noindent \textbf{Keywords}: multilinear PageRank, tensor, Newton-like method, monotone convergence.
\end{abstract}

\section{Introduction}
When receiving a search query, Google' search engine could find an immense set of web pages that contained virtually the same words  as the user entered.  To determine the importance of web pages, Google devised a system of scores called PageRank \cite{page}.
A random web surfer, with probability $\alpha$ randomly transitions according to a column  stochastic matrix $P$, which represents the link structure of the web, and  with probability $1-\alpha$ randomly transitions according to the fixed distribution, a column stochastic vector $v$ \cite{gleich}.

The PageRank vetor $x$, which is  the stationary distribution of the PageRank Markov chain, is unique and solves the linear system
\begin{equation*}
    x=\alpha Px+(1-\alpha)v.
\end{equation*}

Gleich et al. extend PageRank to higher-order Markov chains and proposed multiliner PageRank \cite{multi}. The limiting
probability distribution vector of a transition probability tensor  discussed in \cite{li} can be seen as as a special case of multiliner PageRank.
We recall that an mth-order Markov chain $S$ is a stochastic process that satisfies
\begin{equation*}
    Pr(S_t=i_1|S_{t-1}=i_2,\cdots, S_1=i_t)=Pr(S_t=i_1|S_{t-1}=i_2,\cdots, S_{t-m}=i_{m+1}),
\end{equation*}
where the future state only relies on the past $m$ states. For a second-order n-state Markov chain $S$, its transition probabilities are $\underline{P}_{ijk}=Pr(S_{t+1}=i|S_t=j,S_{t-1}=k)$. Through modelling a random surfer on a higher-order chain, Higher-order PageRank is introduced. With probability $\alpha$, the surfer transitions according to the higher-order chain, and with probability $1-\alpha$, the surfer teleportss according to the distribution  $v$.

Let $\underline{P}$ be an order-$m$ tensor representing an $(m-1)$th order Markov chain, $\alpha$ be a probability less than 1, and $v$ be a stochastic vector. Then the multilinear PageRank vector is a nonnegative, stochastic solution of the following polynomial system:
\begin{equation}\label{muleq0}
    x=\alpha\underline{P}x^{(m-1)}+(1-\alpha) v.
\end{equation}
Here $\underline{P}x^{(m-1)}$ for a vector $x\in\mathbb{R}^n$ denotes a vector in $\mathbb{R}^n$, whose $i$th component is
\begin{equation*}
    \sum_{i_2,\ldots,i_m=1}^{n}\underline{P}_{ii_2\ldots i_m}x_{i_2}\cdots x_{i_m}.
\end{equation*}

Gleich et al. proved that when $\alpha<\frac{1}{m}$,  the multilinear PageRank equation \eqref{muleq0} has a unique solution. Five different methods are studied to compute the multilinear PageRank vector \cite{multi}. Among the five methods, Newton iteration performs well on some tough problems. It's proved that, for a third-order tensor, Newton iteration converges quadratically  when $\alpha < 1/2$ \cite{multi}.

In this paper, we give a modified Newton method for solving the multilinear PageRank vector. We show that, for a third-order tensor when $\alpha < 1/2$, starting with a suitable initial guess, the sequence of the iterative vectors generated by the modified Newton method is monotonically increasing and converges to  solution of equation \eqref{muleq0}. Numerical experiments show that the modified Newton method can be more efficient than Newton iteration.

We introduce some necessary notation for the paper. For any matrices $B=[b_{ij}]\in \mathbb{R}^{n\times n}$, we write $B\geq 0 (B>0)$ if $b_{ij} \geq 0 (b_{ij}> 0)$ holds for all $i,j$. For any matrices $A,B \in \mathbb{R}^{n\times n}$, we write $A\geq B (A > B)$ if $a_{ij}\geq b_{ij}(a_{ij} > b_{ij})$ for all $i, j$. For any vectors $x,y \in \mathbb{R}^n$ ,we write $x\geq y (x>y)$ if $x_i\geq y_i (x_i>y_i)$ holds for all $i=1, \cdots,n$. The vector of all ones is denoted by e, i.e., $e=(1, 1, \cdots, 1)^T$. The identity matrix is denoted by $I$.

The rest of the paper is organized as follows. In section 2 we recall Newton's method and present a modified Newton iterative procedure. In section 3 we prove the monotone convergence result for the modified Newton method. In section 4 we present some
numerical examples, which show that our new algorithm can be faster
than Newton method. In section 5, we give our conclusions.

\section{A Modified Newton Method}
Let $\underline{P}$ be  a third-order stochastic tensor. Let $R$ be the $n$-by-$n^2$ flattening of  $\underline{P}$ along the first index (see \cite{golub} for more on flattening of a tensor):
$$
R=\left[
        \begin{array}{cccccccccc}
         \underline{P}_{111} & \cdots & \underline{P}_{1n1}&\underline{P}_{112}&\cdots &\underline{P}_{1n2}&\cdots & \underline{P}_{11n}&\cdots & \underline{P}_{1nn}\\
         \underline{P}_{211} & \cdots & \underline{P}_{2n1}&\underline{P}_{212}&\cdots &\underline{P}_{2n2}&\cdots & \underline{P}_{21n}&\cdots & \underline{P}_{2nn}\\
         \vdots &\ddots & \vdots & \vdots& \ddots &\vdots & \vdots& \vdots & \ddots & \vdots\\
          \underline{P}_{n11} & \cdots & \underline{P}_{nn1}&\underline{P}_{n12}&\cdots &\underline{P}_{nn2}&\cdots & \underline{P}_{n1n}&\cdots & \underline{P}_{nnn}\\
        \end{array}
      \right].
$$
Here $R$ is with column sums equal to $1$.
Then \eqref{muleq0} is
\begin{equation}\label{muleq}
    \mathcal{F}(x)=x-\alpha R(x\otimes x)-(1-\alpha) v=0.
\end{equation}
The function $\mathcal{F}$ is a mapping from $\mathbb{R}^n$ into
itself and the Fr\'{e}chet derivative of $\mathcal{F}$ at $x$ is a linear map $\mathcal{F}^{'}_x: \mathbb{R}^n\rightarrow\mathbb{R}^n$ given by
\begin{equation}\label{daoshu}
   \mathcal{F}^{'}_x: z\mapsto [I-\alpha R(x\otimes I+I\otimes x)]z=z-\alpha R(x\otimes z+z\otimes x).
\end{equation}
To suppress the technical details, later we will equivalently consider $ \mathcal{F}^{'}_x$ as the matrix $[I-\alpha R(x\otimes I+I\otimes x)].$
The second derivative of $\mathcal{F}$ at $x$, $\mathcal{F}^{"}_x: \mathbb{R}^n \times \mathbb{R}^n \rightarrow\mathbb{R}^n$, is given by
\begin{equation}\label{erdaoshu}
   \mathcal{F}^{''}_x(z_1,z_2)=-\alpha R(z_1\otimes z_2+z_2\otimes z_1).
\end{equation}

For a given $x_0$, the Newton sequence for the solution of $\mathcal{F}(x) = 0$ is
\begin{eqnarray}\label{newit}
% \nonumber to remove numbering (before each equation)
 \nonumber x_{k+1}&=&x_k-(\mathcal{F}^{'}_{x_k})^{-1}\mathcal{F}({x_k})  \\
   &=& x_k-[I-\alpha R(x_k\otimes I+I\otimes x_k)]^{-1}\mathcal{F}(x_k),
\end{eqnarray}
for $k=0,1,\cdots,$ provided that $\mathcal{F}^{'}_{x_k}$ is invertible for all $k$.

As we see, for the nonlinear equation $\mathcal{F}(x)=0$, the sequence generated by Newton iteration will converge quadratically to the the solution \cite{gleich}. However, there is a disadvantage with Newton method. At every Newton iteration step,we need to compute the Fr\'{e}chet derivative and perform an LU factorization. In order to save the overall cost, we present the  modified Newton algorithm for equation \eqref{muleq} as follows.

\vspace{3mm}

\textbf{ Modified Newton Algorithm for Equation \eqref{muleq}}\\

Given initial value $x_{0,0}$, for $i=0,1,\cdots$
\begin{eqnarray}
% \nonumber to remove numbering (before each equation)
\nonumber  x_{i,s} &=& x_{i,s-1}-(\mathcal{F}^{'}_{x_{i,0}})^{-1}\mathcal{F}({x_{i,s-1}}),   \quad s=1,2,\cdots , n_i, \\
\label{likea} &=& x_{i,s-1}-(I-\alpha R(x_{i,0} \otimes I+I\otimes x_{i,0}))^{-1}\mathcal{F}({x_{i,s-1}}), \quad s=1,2,\cdots , n_i, \\
 \label{likeb} x_{i+1,0} &= &x_{i,n_i}
\end{eqnarray}

From equation (\ref{likea}) we see that Newton method results when $n_i=1$ for $i=0,1,\cdots,$ and the chord method \cite{kelley} results when $n_0=\infty$. The chord method needs totally one LU factorization so the cost for each iteration step is low. But the convergence rate of the chord method is very slow.

\section{Convergence Analysis}
In this section, we prove a monotone convergence result for the  modified Newton method for equation \eqref{muleq}.
\subsection{preliminary}
We first recall that a real square matrix $A$ is called a Z-matrix if
all its off-diagonal elements are nonpositive. Note that any Z-matrix A can be
written as $sI-B$ with $B \geq 0$. A Z-matrix $A$ is called an M-matrix if $s\geq \rho(B)$,
where $\rho(\cdot)$ is the spectral radius; it is a singular M-matrix if $s=\rho(B)$ and a
nonsingular M-matrix if $s>\rho(B)$.
We will make use of the following result (see \cite{varga}).
\begin{lemma}\label{yubei1}
For a Z-matrix $A$, the following are equivalent:
\begin{itemize}
  \item [$(a)$] $A$ is a nonsingular M-matrix.
  \item [$(b)$] $A^{-1}\geq 0$ .
  \item [$(c)$] $Av>0$ for some vector $v>0$.
  \item [$(d)$] All eigenvalues of $A$ have positive real parts.
\end{itemize}
\end{lemma}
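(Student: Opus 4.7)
The plan is to close the cycle $(a) \Rightarrow (b) \Rightarrow (c) \Rightarrow (d) \Rightarrow (a)$. Since $A$ is already a Z-matrix, I would fix once and for all a splitting $A = sI - B$ with $B \geq 0$ (choosing $s$ at least as large as the largest diagonal entry of $A$, so that the off-diagonal signs of $A$ force $B \geq 0$) and express everything in terms of $B$ and $s$. The eigenvalues of $A$ are then $s - \mu$ as $\mu$ ranges over the eigenvalues of $B$, which is the bridge connecting the spectral statement $(d)$ to the defining inequality $(a)$.

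For $(a) \Rightarrow (b)$ I would invoke the Neumann series: since $\rho(s^{-1}B) < 1$, the series $\sum_{k \geq 0}(s^{-1}B)^k$ converges to $(I - s^{-1}B)^{-1}$, giving $A^{-1} = s^{-1}\sum_{k \geq 0}(s^{-1}B)^k \geq 0$ termwise. For $(b) \Rightarrow (c)$, I would take $v = A^{-1}e$, so that $v \geq 0$ and $Av = e > 0$; the delicate step is upgrading $v \geq 0$ to $v > 0$, using the Z-matrix sign pattern to rule out a vanishing component. For $(c) \Rightarrow (d)$, the inequality $Av > 0$ rewrites as $Bv < sv$, and the standard Perron--Frobenius bound $\rho(B) \leq \max_i (Bv)_i / v_i$ yields $\rho(B) < s$; then for every eigenvalue $\mu$ of $B$ one has $\operatorname{Re}(s - \mu) \geq s - |\mu| \geq s - \rho(B) > 0$. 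For $(d) \Rightarrow (a)$, Perron--Frobenius ensures that $\rho(B)$ is itself an eigenvalue of $B$, so $s - \rho(B)$ is an eigenvalue of $A$; positivity of its real part forces $s > \rho(B)$, which is exactly $(a)$.

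The main obstacle is the step $(b) \Rightarrow (c)$: naively $v = A^{-1}e$ only inherits $v \geq 0$ from $A^{-1} \geq 0$, and one must invoke the Z-matrix off-diagonal nonpositivity to convert a putative zero component $v_i = 0$ into the contradiction $(Av)_i = \sum_{j \neq i} A_{ij} v_j \leq 0 < 1 = e_i$. Elsewhere the machinery reduces to the Neumann series together with the classical Perron--Frobenius theorem for nonnegative matrices, so the proof is essentially an orchestration of these two tools around the splitting $A = sI - B$.
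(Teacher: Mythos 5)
Your proof is correct. The paper itself gives no proof of this lemma --- it is quoted as a classical fact with a reference to Varga's \emph{Matrix Iterative Analysis} --- so there is no argument of the paper's to compare against; your cycle $(a)\Rightarrow(b)\Rightarrow(c)\Rightarrow(d)\Rightarrow(a)$ is the standard textbook route. All four implications check out: the Neumann series for $(a)\Rightarrow(b)$, the choice $v=A^{-1}e$ with the off-diagonal sign pattern ruling out $v_i=0$ for $(b)\Rightarrow(c)$, the similarity/row-sum bound $\rho(B)\leq\max_i (Bv)_i/v_i$ for $(c)\Rightarrow(d)$, and the weak Perron--Frobenius fact that $\rho(B)$ is an eigenvalue of the nonnegative matrix $B$ for $(d)\Rightarrow(a)$. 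The only point worth making explicit is that the quantity $s-\rho(B)$ does not depend on the chosen splitting $A=sI-B$ (shifting $s$ shifts $\rho(B)$ by the same amount, again because $\rho(B)$ is an eigenvalue of $B$), so fixing one splitting at the outset loses no generality.
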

The next result is also well known and also can be found in \cite{varga}.
\begin{lemma}\label{yubei2}
Let $A$ be a nonsingular M-matrix. If $B \geq A$ is a Z-matrix, then $B$ is also  nonsingular M-matrix . Moreover, $B^{-1}\leq A^{-1}$.
\end{lemma}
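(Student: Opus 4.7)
The plan is to leverage Lemma~\ref{yubei1} and reduce everything to the Z-matrix characterization via a strictly positive ``test vector'' $v$. First I would unpack what $B\geq A$ means under the Z-matrix hypothesis: componentwise, $b_{ii}\geq a_{ii}$ on the diagonal, while on the off-diagonal entries the inequality $b_{ij}\geq a_{ij}$ (both nonpositive) says $|b_{ij}|\leq|a_{ij}|$. So $B$ has at least as much diagonal mass as $A$ and no more off-diagonal mass than $A$ in absolute value, which is the intuition behind why it should inherit the M-matrix property.

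To prove $B$ is a nonsingular M-matrix, I would use the implication $(a)\Rightarrow(c)$ in Lemma~\ref{yubei1}: since $A$ is a nonsingular M-matrix, there is some $v>0$ with $Av>0$. Then from $B\geq A$ entrywise and $v>0$, the product $Bv$ satisfies
\begin{equation*}
    (Bv)_i=\sum_{j}b_{ij}v_j\geq \sum_{j}a_{ij}v_j=(Av)_i>0,
\end{equation*}
so $Bv>0$. Combined with the assumption that $B$ is a Z-matrix, the reverse implication $(c)\Rightarrow(a)$ in Lemma~\ref{yubei1} yields that $B$ is a nonsingular M-matrix.

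For the inverse comparison, I would use the standard resolvent-style identity
\begin{equation*}
    A^{-1}-B^{-1}=B^{-1}(B-A)A^{-1}.
\end{equation*}
By part $(b)$ of Lemma~\ref{yubei1}, both $A^{-1}\geq 0$ and $B^{-1}\geq 0$, and by hypothesis $B-A\geq 0$. The right-hand side is therefore a product of three nonnegative matrices, hence nonnegative, giving $A^{-1}\geq B^{-1}$, i.e.\ $B^{-1}\leq A^{-1}$.

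There is essentially no hard step here: the entire argument is routine once one recognizes that the positive test vector characterization $(c)$ in Lemma~\ref{yubei1} transfers immediately from $A$ to $B$ under an entrywise inequality, and that the identity $A^{-1}-B^{-1}=B^{-1}(B-A)A^{-1}$ turns the nonnegativity of $B-A$ into the desired inequality on inverses. The only mild care needed is to remember that the Z-matrix assumption on $B$ is essential for invoking Lemma~\ref{yubei1}(c)$\Rightarrow$(a); without it, $Bv>0$ alone would not force $B$ to be an M-matrix.
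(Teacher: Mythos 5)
Your proof is correct. The paper does not actually prove this lemma---it simply cites it as a well-known fact from Varga's book---so there is no in-paper argument to compare against; your derivation is the standard one: transferring the positive test vector $v$ from $A$ to $B$ via $Bv\geq Av>0$ and invoking Lemma~\ref{yubei1}(c)$\Rightarrow$(a), then obtaining $B^{-1}\leq A^{-1}$ from the identity $A^{-1}-B^{-1}=B^{-1}(B-A)A^{-1}$ together with Lemma~\ref{yubei1}(b). Both steps are sound, and your remark that the Z-matrix hypothesis on $B$ is essential for the first step is exactly the right point of care.
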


\subsection{Monotone convergence}
The next lemma displays the monotone convergence properties of Newton iteration for  \eqref{muleq}.  \begin{lemma}\label{lemm1}
Suppose that a vector $x$ is such that
\begin{itemize}
  \item [(i)] $\mathcal{F}(x)\leq 0$,
  \item [(ii)] $0\leq x $, and  $e^Tx \leq 1$.
\end{itemize}
Then there exists the vector
\begin{equation}\label{zheng1}
    y=x-(\mathcal{F}^{'}_{x})^{-1}\mathcal{F}(x)
\end{equation}
such that
\begin{itemize}
  \item [(a)] $\mathcal{F}(y)\leq 0$,
  \item [(b)] $0\leq x\leq y$, and $e^Ty \leq 1$.
\end{itemize}
\end{lemma}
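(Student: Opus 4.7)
My plan is to derive the conclusions in the order (existence/monotonicity, then (a), then the stochastic bound), with everything resting on identifying $\mathcal{F}'_x$ as a nonsingular M-matrix. The first step is this: since $R\geq 0$ and $x\geq 0$, the matrix $\mathcal{F}'_x = I - \alpha R(x\otimes I + I\otimes x)$ is a Z-matrix, and to invoke Lemma~\ref{yubei1} I would use column stochasticity of $R$. Because $e^T R$ is the all-ones row vector in $\mathbb{R}^{1\times n^2}$, a direct Kronecker calculation gives $e^T R(x\otimes I) = e^T R(I\otimes x) = (e^T x)\, e^T$, and hence
\begin{equation*}
e^T \mathcal{F}'_x = (1 - 2\alpha\, e^T x)\, e^T,
\end{equation*}
which is strictly positive under the ambient assumption $\alpha < 1/2$ together with (ii). Thus $(\mathcal{F}'_x)^T e > 0$; since $(\mathcal{F}'_x)^T$ is also a Z-matrix, parts (c) and (b) of Lemma~\ref{yubei1} yield $(\mathcal{F}'_x)^{-1}\geq 0$. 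In particular $y$ exists, and because $-\mathcal{F}(x)\geq 0$ by (i), $y - x = -(\mathcal{F}'_x)^{-1}\mathcal{F}(x)\geq 0$, which gives $0\leq x\leq y$.

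For (a), I would exploit the quadratic form of $\mathcal{F}$: Taylor's formula
\begin{equation*}
\mathcal{F}(y) = \mathcal{F}(x) + \mathcal{F}'_x(y-x) + \tfrac{1}{2}\mathcal{F}''_x(y-x,\,y-x)
\end{equation*}
is an exact identity, the first two terms cancel by the definition of $y$, and by \eqref{erdaoshu} the remainder equals $-\alpha R\bigl((y-x)\otimes(y-x)\bigr)\leq 0$, using $R\geq 0$ and $y\geq x$.

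For the bound $e^T y\leq 1$, I would compute $e^T y$ explicitly. Rearranging $\mathcal{F}'_x y = \mathcal{F}'_x x - \mathcal{F}(x)$ gives
\begin{equation*}
y - \alpha R(x\otimes y + y\otimes x) = (1-\alpha)\,v - \alpha R(x\otimes x);
\end{equation*}
applying $e^T$, using $e^T v = 1$ and the stochasticity identity $e^T R(u\otimes w) = (e^T u)(e^T w)$, and setting $s := e^T x$, I obtain $(1-2\alpha s)\,e^T y = (1-\alpha) - \alpha s^2$, whence
\begin{equation*}
1 - e^T y \;=\; \frac{\alpha(1-s)^2}{1 - 2\alpha s} \;\geq\; 0.
\end{equation*}

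The main obstacle is the first step: choosing the correct positive test vector, which turns out to be the \emph{row} vector $e^T$ (not $e$ itself), because column stochasticity of $R$ acts from the left. Once $(\mathcal{F}'_x)^{-1}\geq 0$ is in hand, conclusion (a) is forced by the exactness of the quadratic Taylor expansion, and the stochastic bound reduces to scalarizing the Newton relation and factoring $(1-s)^2$.
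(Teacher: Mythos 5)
Your proof is correct and follows essentially the same route as the paper: establish that $\mathcal{F}'_x$ is a nonsingular M-matrix via the column-sum/stochasticity of $R$ (the paper phrases this as strict diagonal dominance, you phrase it as the test-vector criterion applied to the transpose), then get (a) from the exact quadratic Taylor expansion, and the bound $e^Ty\leq 1$ by scalarizing the Newton relation. Your version is in fact slightly more explicit than the paper's, which stops at the identity $e^Ty=\frac{1-\alpha-\alpha(e^Tx)^2}{1-2\alpha(e^Tx)}$ without writing out the factorization $1-e^Ty=\frac{\alpha(1-e^Tx)^2}{1-2\alpha e^Tx}$.
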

\begin{proof}
Note that $R$ is with all  column sums equal to $1$, so both $R(x\otimes I)$ and $R(I\otimes x)$ are nonnegative matrices whose column sums are $e^Tx$.
If  $0\leq x $ and $e^Tx \leq 1$, then $\mathcal{F}^{'}_{x}=I-\alpha R(x\otimes I+I\otimes x)$ is strictly diagonally dominant and thus a nonsingular M-matrix. So from lemma \ref{yubei1} and condition $(i)$, $y$ is well defined and $y-x\geq 0$.

From equation \eqref{zheng1} and Taylor formula,
we have
   \begin{eqnarray*}\mathcal{F}(y)&=&\mathcal{F}(x)+\mathcal{F}^{'}_{x}(y-x)+\frac{1}{2}\mathcal{F}^{''}_x(y-x,y-x)\\
       \nonumber &=&\frac{1}{2}\mathcal{F}^{''}_x(y-x,y-x)  \\
      \nonumber &=&-\alpha R[(y-x)\otimes (y-x)] \leq 0
     \end{eqnarray*}
We now prove the second term of (b). A mathematically equivalent form of \eqref{zheng1}
is
\begin{equation}\label{pzhongjian}
   [I-\alpha R(x\otimes I+I\otimes x)](y-x)=\alpha R(x\otimes x)+(1-\alpha) v-x.
   \end{equation}
  Taking summations  on both sides of equation \eqref{pzhongjian},
 we get
 \begin{equation*}
   [1-2\alpha (e^Tx)](e^Ty-e^Tx)=\alpha (e^Tx)^2+(1-\alpha)-(e^Tx),
 \end{equation*}
 which is equivalent to
 \begin{equation}\label{pzhongjian2}
   e^Ty=\frac{1-\alpha-\alpha(e^Tx)^2}{1-2\alpha(e^T x)}.
   \end{equation}

Combining \eqref{pzhongjian2} and $e^Tx \leq 1$, we know $e^Ty>1$ doesn't hold, thus $e^Ty \leq 1$.
\end{proof}
The next lemma is an extension of Lemma \ref{lemm1}, which will be the theoretical basis of monotone convergence result of Newton-like method for \eqref{muleq}.

\begin{lemma}\label{lemm2}
Suppose there is a vector $x$ such that
\begin{itemize}
  \item [(i)] $\mathcal{F}(x)\leq 0$,
  \item [(ii)] $0\leq x$, and $e^T x\leq 1$.
\end{itemize}
Then for any vector $z$ with $0\leq z\leq x$, there exists the vector
\begin{equation}\label{qzheng2}
    y=x-(\mathcal{F}^{'}_{z})^{-1}\mathcal{F}(x)
\end{equation}
such that
\begin{itemize}
  \item [(a)] $\mathcal{F}(y)\leq 0$,
  \item [(b)] $0\leq x\leq y$, and $e^Ty \leq 1$.
\end{itemize}
\end{lemma}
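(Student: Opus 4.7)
The plan is to mimic the proof of Lemma~\ref{lemm1}, accounting for the fact that the linearization is now performed at $z$ rather than at $x$. All the positivity structure still comes from the column-stochasticity of $R$, and the new complication is a cross-term produced by the mismatch between the evaluation point $x$ and the expansion point $z$; this cross-term will turn out to be automatically of the right sign because $0 \le z \le x \le y$.

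First I would verify that $y$ is well-defined and satisfies $y \ge x$. Because $R$ is column-stochastic, both $R(z\otimes I)$ and $R(I\otimes z)$ are nonnegative matrices whose column sums equal $e^Tz \le e^Tx \le 1$. Together with $\alpha<1/2$ this makes $\mathcal{F}'_{z} = I-\alpha R(z\otimes I + I\otimes z)$ a strictly diagonally dominant Z-matrix, hence a nonsingular M-matrix by Lemma~\ref{yubei1}, and so $(\mathcal{F}'_{z})^{-1}\ge 0$. Since $-\mathcal{F}(x)\ge 0$ by hypothesis~(i), formula \eqref{qzheng2} yields $y-x=-(\mathcal{F}'_{z})^{-1}\mathcal{F}(x)\ge 0$.

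Next I would establish $\mathcal{F}(y)\le 0$. Expanding the quadratic $\mathcal{F}$ about $x$ via Taylor's formula and using $\mathcal{F}'_{x}=\mathcal{F}'_{z}+(\mathcal{F}'_{x}-\mathcal{F}'_{z})$ together with the defining identity $\mathcal{F}'_{z}(y-x) = -\mathcal{F}(x)$, one obtains
\begin{equation*}
\mathcal{F}(y) = (\mathcal{F}'_{x}-\mathcal{F}'_{z})(y-x) + \tfrac{1}{2}\mathcal{F}''_{x}(y-x,y-x).
\end{equation*}
Plugging in the explicit expressions \eqref{daoshu} and \eqref{erdaoshu} gives
\begin{equation*}
\mathcal{F}(y) = -\alpha R\bigl[(x-z)\otimes(y-x)+(y-x)\otimes(x-z)\bigr] - \alpha R\bigl[(y-x)\otimes(y-x)\bigr],
\end{equation*}
which is nonpositive because $R\ge 0$, $x-z\ge 0$ and $y-x\ge 0$. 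This is the step I expect to be the main obstacle, since compared with Lemma~\ref{lemm1} the cross-term $(x-z)\otimes(y-x)$ appears and one has to check that its sign is still favourable; the hypothesis $z\le x$ is precisely what makes it work.

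Finally, for $e^Ty\le 1$, I would take $e^T$ of the identity $[I-\alpha R(z\otimes I + I\otimes z)](y-x)=\alpha R(x\otimes x)+(1-\alpha)v - x$. Using that the column sums of $R(z\otimes I)$ and $R(I\otimes z)$ are $e^Tz$, and that $e^TR(x\otimes x)=(e^Tx)^2$, this reduces to
\begin{equation*}
(1-2\alpha\, e^Tz)(e^Ty - e^Tx) = \alpha (e^Tx)^2 + (1-\alpha) - e^Tx.
\end{equation*}
Since $\alpha<1/2$ and $e^Tz\le 1$, the coefficient $1-2\alpha e^Tz$ is positive, so this determines $e^Ty$ explicitly. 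Setting $t=e^Tx$, $s=e^Tz$, a short algebraic check shows
\begin{equation*}
(1-2\alpha s)(1-t) - \bigl[\alpha t^{2}+(1-\alpha)-t\bigr] = \alpha(1-t)(1+t-2s) \ge 0,
\end{equation*}
where the last inequality uses $0\le s\le t\le 1$. Rearranging gives $e^Ty\le 1$, completing the argument.
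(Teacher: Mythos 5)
Your proof is correct, and for the two substantive parts (well-definedness with $y\ge x$, and the Taylor-expansion decomposition producing the cross-term $\mathcal{F}''_x(x-z,y-x)+\tfrac12\mathcal{F}''_x(y-x,y-x)\le 0$) it is essentially identical to the paper's argument. The one place you diverge is the bound $e^Ty\le 1$: the paper gets it for free by comparing with the full Newton step $\hat y=x-(\mathcal{F}'_x)^{-1}\mathcal{F}(x)$, noting that $0\le(\mathcal{F}'_z)^{-1}\le(\mathcal{F}'_x)^{-1}$ (Lemma \ref{yubei2}) forces $y\le\hat y$ and then invoking $e^T\hat y\le 1$ from Lemma \ref{lemm1}; you instead sum the defining identity and verify directly that $(1-2\alpha s)(1-t)-[\alpha t^2+(1-\alpha)-t]=\alpha(1-t)(1+t-2s)\ge 0$ for $0\le s\le t\le 1$. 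Both are valid; the paper's route is shorter and reuses Lemma \ref{lemm1}, while yours is self-contained, gives the explicit value of $e^Ty$, and also sidesteps the need to introduce $\hat y$ at all (you establish $y\ge x$ directly from $(\mathcal{F}'_z)^{-1}\ge 0$ rather than via the sandwich $x\le y\le\hat y$).
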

\begin{proof}
Here let
$$ \hat{y}=x-(\mathcal{F}^{'}_{x})^{-1}\mathcal{F}(x).$$
First, from Lemma \ref{lemm1}, we know that $\mathcal{F}^{'}_x$ is a nonsingular M-matrix.
 Because $0\leq z\leq x$ and Lemma \ref{yubei2}, we know that $\mathcal{F}^{'}_z$ is also a nonsingular M-matrix and $$ 0\leq [\mathcal{F}^{'}_z]^{-1}\leq [\mathcal{F}^{'}_x]^{-1}.$$
 So the vector $y$ is well defined and $0\leq x \leq y \leq \hat{y} $.
 From  Lemma \ref{lemm1},  we know $e^T\hat{y}\leq 1$, so $e^Ty\leq 1$. So (b) is true.
We have
  \begin{eqnarray*} \mathcal{F}(y)&=&\mathcal{F}(x)+\mathcal{F}^{'}_x(y-x)+\frac{1}{2}\mathcal{F}^{''}_x(y-x,y-x)\\
       \nonumber &=&\mathcal{F}(x)+\mathcal{F}^{'}_z(y-x)+(\mathcal{F}^{'}_x-\mathcal{F}^{'}_z)(y-x)+\frac{1}{2}\mathcal{F}^{''}_x(y-x,y-x)  \\
       \nonumber &=&\mathcal{F}^{''}_x(x-z,y-x)+\frac{1}{2}\mathcal{F}^{''}_x(y-x,y-x)  \\
      \nonumber & \leq& 0,
     \end{eqnarray*}
 the last inequality holds because $x-z\geq 0$ and $y-x\geq 0$. So (a) is true.
\end{proof}

Using Lemma \ref{lemm2}, we can get the following monotone convergence result of Newton-like methods for \eqref{muleq}. For  $i=0,1,\cdots$, we will use $x_i$ to denote $x_{i,0}$ in the Newton-like algorithm \eqref{likeb}, thus $x_i=x_{i,0}= x_{i-1,n_{i-1}}$.
\begin{theorem}\label{thm1}
Suppose that a vector $x_{0,0}$ is such that
\begin{itemize}
  \item [(i)] $\mathcal{F}(x_{0,0})\leq 0$,
  \item [(ii)] $0\leq x_{0,0}$, and $e^Tx_{0,0}\leq 1$.
\end{itemize}
Then the Newton-like algorithm \eqref{likea},\eqref{likeb} generates a sequence $\{x_{k}\}$
such that $x_k \leq x_{k+1} $ for all $k \geq 0$, and $ \lim_{k \to \infty} \mathcal{F}(x_k)=0$.
\end{theorem}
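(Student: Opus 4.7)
The plan is to unroll the modified Newton algorithm and apply Lemma \ref{lemm2} repeatedly, both across outer blocks and within each inner block, to propagate the invariants
$$ \mathcal{F}(x_{i,s}) \leq 0, \qquad 0 \leq x_{i,s}, \qquad e^T x_{i,s} \leq 1 $$
together with the monotonicity $x_{i,s-1} \leq x_{i,s}$. I would set this up by double induction. The base case $(i,s)=(0,0)$ is just the hypothesis on $x_{0,0}$. For the inner induction inside block $i$, assume $x_{i,0} \leq x_{i,s-1}$ and that $x_{i,s-1}$ satisfies the three invariants. Then Lemma \ref{lemm2}, applied with $x=x_{i,s-1}$ and $z=x_{i,0}$ (note $0\le z\le x$ by the inner induction hypothesis), yields exactly that $x_{i,s}=x_{i,s-1}-(\mathcal{F}'_{x_{i,0}})^{-1}\mathcal{F}(x_{i,s-1})$ is well defined, $x_{i,s-1}\leq x_{i,s}$, $\mathcal{F}(x_{i,s})\leq 0$, and $e^Tx_{i,s}\leq 1$. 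This closes the inner induction, and the passage to the next block is immediate from $x_{i+1,0}=x_{i,n_i}$.

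Combining the inner and outer steps, the full sequence $\{x_{i,s}\}$, read in lexicographic order, is componentwise nondecreasing. Restricting to the outer indices, $x_k\leq x_{k+1}$ for all $k\geq 0$, proving the first claim.

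Next I would establish convergence. Since $0\leq x_k$ and $e^T x_k\leq 1$, the sequence is bounded (indeed $0\le (x_k)_j\le 1$ componentwise), so monotone boundedness gives a limit $x^* = \lim_{k\to\infty} x_k$ with $0\leq x^* $ and $e^T x^* \leq 1$. Continuity of $\mathcal{F}$ then yields $\mathcal{F}(x_k)\to \mathcal{F}(x^*)$, and the invariant $\mathcal{F}(x_k)\le 0$ passes to the limit.

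The final and most delicate step is to show $\mathcal{F}(x^*)=0$ rather than merely $\mathcal{F}(x^*)\leq 0$. My plan is to use the very first inner step of each block: $x_{i,1}-x_{i,0} = -(\mathcal{F}'_{x_{i,0}})^{-1}\mathcal{F}(x_{i,0})$. By monotonicity we have $x_{i,0}\leq x_{i,1}\leq x_{i,n_i}=x_{i+1,0}$, and both outer bounds converge to $x^*$, so $x_{i,1}\to x^*$ too, giving $(\mathcal{F}'_{x_{i,0}})^{-1}\mathcal{F}(x_{i,0})\to 0$. Since $e^T x^*\leq 1$ and $\alpha<1/2$, the matrix $\mathcal{F}'_{x^*}=I-\alpha R(x^*\otimes I+I\otimes x^*)$ is strictly (columnwise) diagonally dominant, hence a nonsingular M-matrix by Lemma \ref{yubei1}; by continuity $(\mathcal{F}'_{x_{i,0}})^{-1}\to (\mathcal{F}'_{x^*})^{-1}$ exists and is bounded. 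Multiplying by $\mathcal{F}'_{x_{i,0}}$ gives $\mathcal{F}(x_{i,0})\to 0$, i.e.\ $\mathcal{F}(x^*)=0$, as desired.

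The main obstacle I anticipate is the bookkeeping in the induction: one must carefully verify that within each block the hypotheses of Lemma \ref{lemm2} are satisfied with $z=x_{i,0}$ \emph{fixed} while $x$ ranges over $x_{i,s-1}$, which requires knowing in advance that $x_{i,0}\leq x_{i,s-1}$. This is handled by nesting the inner induction on $s$ inside the outer induction on $i$, so that the monotonicity needed to apply the lemma at step $s$ has already been established at step $s-1$. The passage to the limit in the last step is by comparison the routine part, once nonsingularity of $\mathcal{F}'_{x^*}$ has been extracted from $e^T x^*\leq 1$ and $\alpha<1/2$.
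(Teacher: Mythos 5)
Your proposal is correct and follows essentially the same route as the paper: a double induction driven by Lemma \ref{lemm2} with $z=x_{i,0}$ held fixed inside each block, monotone boundedness to obtain the limit $x^*$, and a squeeze on the first inner step $x_{i,1}-x_{i,0}=-(\mathcal{F}'_{x_{i,0}})^{-1}\mathcal{F}(x_{i,0})$ to force $\mathcal{F}(x^*)=0$. The only (immaterial) difference is in that last step: the paper bounds $-(\mathcal{F}'_{x_i})^{-1}\mathcal{F}(x_i)$ from below by the fixed quantity $-(\mathcal{F}'_{x_0})^{-1}\mathcal{F}(x_i)$ via Lemma \ref{yubei2} and concludes from nonsingularity of $\mathcal{F}'_{x_0}$, whereas you pass to the limit using continuity of $x\mapsto\mathcal{F}'_x$ at $x^*$; both arguments are valid.
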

\begin{proof}
We prove the theorem by mathematical induction. From Lemma \ref{lemm2}, we have
\begin{equation*}
     x_{0,0}\leq \cdots \leq x_{0,n_0}=x_1,
\end{equation*}
\begin{equation*}
    \mathcal{F}(x_1)\leq 0,
\end{equation*}
and
\begin{equation*}
    e^Tx_1\leq 1.
\end{equation*}
Assume $ e^Tx_i\leq 1$,
\begin{equation*}
    \mathcal{F}(x_i)\leq 0,
\end{equation*}
and
\begin{equation*}
    x_{0,0}\leq \cdots \leq x_{0,n_0}=x_1 \leq \cdots \leq x_{i-1,n_{i-1}}=x_{i}.
\end{equation*}
Again by Lemma \ref{lemm2} we have
\begin{equation*}
    \mathcal{F}(x_{i+1})\leq 0,
\end{equation*}
\begin{equation*}
    x_{i,0}\leq \cdots \leq x_{i,n_i}=x_{i+1},
\end{equation*}
and $e^Tx_{i+1}\leq 1$.
Therefore we have proved inductively the sequence $\{x_k\}$ is monotonically increasing and bounded above. So it has a limit $x_*$. Next we show that $\mathcal{F}(x_*)=0$.
Since $x_0\leq x_k$, from Lemma \ref{yubei2} we have
$$0\leq (\mathcal{F}^{'}_{x_0})^{-1} \leq (\mathcal{F}^{'}_{x_k})^{-1} .$$

Let $i\rightarrow \infty$ in $x_{i+1}\geq x_{i,1}=x_i-(\mathcal{F}^{'}_{x_i})^{-1}\mathcal{F}({x_i})\geq x_i-(\mathcal{F}^{'}_{x_0})^{-1}\mathcal{F}({x_i})\geq 0$,
then we get $$\lim_{i\rightarrow \infty}(\mathcal{F}^{'}_{x_0})^{-1}\mathcal{F}({x_i})=0.$$
$\mathcal{F}(x)$ is continuous at $x_*$, so $(\mathcal{F}^{'}_{x_0})^{-1}\mathcal{F}({x_*})=0,$ then we get
$\mathcal{F}({x_*})=0.$

\end{proof}

\section{Numerical Experiments}
We remark that the modified Newton method differs from Newton's method in that the evaluation and factorization of the  Fr\'{e}chet derivative are not done at every iteration step. So, while more iterations will be needed than for Newton's method, the overall cost of the modified Newton method could be much less. Our numerical experiments confirm the efficiency of the modified Newton method for equation \eqref{muleq}.

About how to choose the optimal scalars $n_i$ in the Newton-like  algorithm (\ref{likea}), now we have no theoretical results. This is a goal for our future research. In our extensive numerical experiments, we update the Fr\'{e}chet derivative every four iteration steps. That is, for $i=0,1,\cdots$  we choose $n_i=4$ in the Newton-like  algorithm (\ref{likea}).

We define the number of the factorization of the Fr\'{e}chet derivative in the algorithm as the outer iteration steps, which is $i+1$ when $s>0$ or $i$ when $s=0$ for an approximate solution $x_{i,s}$ in the modified Newton algorithm.

The outer iteration steps (denoted as ``iter"),  the
elapsed CPU time in seconds (denoted as ``time"), and the normalized residual
(denoted as "NRes" ) are used to measure the
feasibility and effectiveness of our new method, where "NRes" is
defined as
\begin{equation*}
\mbox{NRes}=\frac{\parallel \tilde{x}-\alpha R(\tilde{x}\otimes \tilde{x})-(1-\alpha) v\parallel_1}{(1-\alpha)\parallel  v\parallel_1 + \alpha\parallel R(\tilde{x}\otimes \tilde{x})\parallel_1+\parallel \tilde{x}\parallel_1 },
\end{equation*}
where $\parallel\cdot\parallel_1$ denotes the 1-norm of the vector and $\tilde{x}$ is an approximate solution to the solution of \eqref{muleq}. We use $x=0$ as the initial iteration value of the Newton-like method. The numerical  tests were performed on a laptop (2.4 Ghz and 2G Memory) with MATLAB R2013b. Numerical experiments show that the the modified Newton method could be more efficient than Newton iteration. We present the numerical results for a random-generated problem.  The MATLAB code used for its generation is reported here. The problem size is $n=300$ in Table 1. %Notice that the matrix $R\in $\mathbb{R}^{n\timesn^2}$.

\begin{quote}
function[R,v]=page(n)\\
v=ones(n,1);\\
N=n*n;\\
rand('state',0);\\
R=rand(n,n*n);\\
s=v'*R;\\
for i=1:N\\
    R(:,i)=R(:,i)/s(i);\\
end\\
v=v/n;\\
\end{quote}

\begin{table}[h]
\label{tab1}
\begin{center}
\caption{Comparison of the numerical results}
\begin{tabular}{|c|c|c|c|c|}\hline
$\alpha$ & Method &  time & NRes & iter\\
\hline  & Newton&  24.648 &  5.19e-13& 9 \\
\cline{2-5} 0.490 & modified Newton & 18.580 & 8.79e-13 & 5 \\
\hline  & Newton&  28.782 &  1.29e-13& 10 \\
\cline{2-5} 0.495 & modified Newton & 19.656 & 3.11e-12 & 5 \\
\hline  & Newton&  32.745 &  3.07e-13& 12 \\
\cline{2-5} 0.499 & modified Newton & 23.275& 9.63e-12 & 6 \\

\hline
\end{tabular}
\end{center}
\end{table}

\section{Conclusions}
In this paper, the application of the Newton-like method to the polynomial system of equations   arising from the multilinear PageRank problem has been considered. The convergence analysis shows that this method is feasible in a particular parameter regime. Numerical calculations show that the modified Newton method can outperform Newton's method.


\begin{thebibliography}{99}

\bibitem{page}
Page L, Brin S, Motwani R, et al. The PageRank citation ranking: bringing order to the web. 1999.


\bibitem{gleich}
Gleich D F. PageRank beyond the Web[J]. SIAM Review, 2015, 57(3): 321-363.
\bibitem{multio}
Lim L H. Multilinear pagerank: measuring higher order connectivity in linked objects. The Internet: Today and Tomorrow, 2005.
\bibitem{multi}
David F. Gleich and Lek-Heng Lim and Yongyang Yu.Multilinear PageRank. SIAM Journal on Matrix Analysis and Applications,2015:1507-1541.
\bibitem{qi}
Qi L. Eigenvalues of a real supersymmetric tensor. Journal of Symbolic Computation, 2005, 40(6): 1302-1324.
\bibitem{qi1}
Qi L, Teo K L. Multivariate polynomial minimization and its application in signal processing. Journal of Global Optimization, 2003, 26(4): 419-433.
\bibitem{varga}
 R. Varga,  Matrix iterative analysis, Prentice-Hall, Upper Saddle River, NJ, 1962.
\bibitem{golub}
Golub G H, Van Loan C F. Matrix computations. Johns Hopkins University Press, 2012.
\bibitem{kelley}
Kelley, C.T.: Solving Nonlinear Equations with Newton's Method. SIAM, Philadelphia, 2003.
\bibitem{li}
Li W, Ng M K. On the limiting probability distribution of a transition probability tensor[J]. Linear and Multilinear Algebra, 2014, 62(3): 362-385.
\end{thebibliography}
\end{document}